\newtheorem{theorem}{Theorem}
\newtheorem{lemma}[theorem]{Lemma}
\newtheorem{conj}{Conjecture}
\newcommand{\N}{\mathbb N}
\title{Countable graphs are majority $3$-choosable}
\author{John Haslegrave}
\affil{Mathematics Institute, University of Warwick, CV4 7AL, UK}
\begin{document}
\maketitle

\begin{abstract}The Unfriendly Partition Conjecture posits that every countable graph admits a $2$-colouring in which for each vertex there are at least as many bichromatic edges containing that vertex as monochromatic ones. This is not known in general, but it is known that a $3$-colouring with this property always exists. Anholcer, Bosek and Grytczuk recently gave a list-colouring version of this conjecture, and proved that such a colouring exists for lists of size $4$. We improve their result to lists of size $3$; the proof extends to directed acyclic graphs. We also discuss some generalisations.
\end{abstract}

\section{Introduction}
It is a simple exercise to show that the vertices of any finite graph can be partitioned into two parts so that every vertex is in the opposite part to at least half of its neighbours. This was first observed by Lov\'asz \cite{Lov}. Such a partition is often referred to as ``unfriendly''. A natural question is whether infinite graphs necessarily have unfriendly partition, where if a vertex has infinite neighbourhood we interpret ``at least half'' to mean a set of the same cardinality as the whole neighbourhood. Shelah and Milner \cite{SM90} answer this question (which they attribute to Cowan and Emerson) in the negative by constructing uncountable counterexamples; however, they conjecture that any countable graph has an unfriendly partition.

This conjecture has been proved in some cases, such as for graphs with finitely many vertices of infinite degree by Aharoni, Milner and Prikry \cite{AMP}, for rayless graphs by Bruhn, Diestel, Georgakopoulos and Spr\"ussel \cite{BDGS}, and for graphs with no subdivision of an infinite clique by Berger \cite{Ber}; the first two results mentioned make no assumption on the cardinality of the graph.

Unfriendly partitions may be rephrased in the language of colourings. A \textit{majority colouring} of a graph is an assignment of colours to vertices such that at most half of the edges incident with any vertex are monochromatic, and a graph is \textit{majority $\ell$-colourable} if it has a majority colouring using at most $\ell$ colours. The Unfriendly Partition Conjecture is that every countable graph is majority $2$-colourable. Shelah and Milner \cite{SM90} showed that every graph (even without the assumption of countability) is majority $3$-colourable.

A classical extension of (proper) colouring of graphs is the concept of list colouring, introduced independently by Vizing \cite{Viz} and by Erd\H{o}s, Rubin and Taylor \cite{ERT}. Instead of assigning colours to vertices from a fixed palette of $\ell$ colours, each vertex $v$ must be assigned one of a list $L(v)$ of $\ell$ colours. Does a suitable colouring exist for every possible system of lists? While it is natural to suppose that this is hardest to achieve when all lists coincide, in fact this is not the case. For example, $K_{3,3}$ can be properly $2$-coloured, but there is a system of lists of size $2$ from which no proper colouring exists. As well as being an interesting problem in its own right, moving to the more general setting of list colouring can facilitate proving results about colouring. More recently, the even more general setting of correspondence (or DP) colouring by Dvo\v{r}\'ak and Postle \cite{DP18} has attracted a great deal of interest. 

Majority list colourings were introduced as an open question by Kreutzer, Oum, Seymour, van der Zypen and Wood \cite{KOSvW}, with the first significant results being due to Anholcer, Bosek and Grytczuk \cite{ABG17}. A graph $G$ is \textit{majority $\ell$-choosable} if for any system of lists $\bigl(L(v)\bigr)_{v\in V(G)}$ of size $\ell$ there is a majority colouring of $G$ in which each vertex $v$ is assigned a colour from $L(v)$. Anholcer, Bosek and Grytczuk \cite{ABG20} recently made the following analogue of the Unfriendly Partition Conjecture.
\begin{conj}\label{c:simple}Any countable graph is majority $2$-choosable.\end{conj}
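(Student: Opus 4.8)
The plan is to isolate the difficulty caused by infinite-degree vertices. First I would reduce to connected graphs, since the components of $G$ may be coloured independently. A locally finite connected graph is then handled by compactness: give the set of colourings $\prod_{v}L(v)$ the product topology, which is compact because each list is finite. For each vertex $v$ the event ``at most half the neighbours of $v$ share $v$'s colour'' depends only on the finitely many coordinates in the closed neighbourhood $N[v]$ and so is clopen, whence by compactness it suffices to satisfy any one finite set $S$ of vertices simultaneously. For such an $S$ one colours the finite induced subgraph on $N[S]$ by the usual flip argument — repeatedly recolour any vertex more than half of whose neighbours match it to the other colour of its list, which strictly lowers the total number of monochromatic edges and hence terminates — and then extends arbitrarily elsewhere; since every neighbour of a vertex of $S$ lies in $N[S]$, this makes all of $S$ good. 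This disposes of locally finite graphs.

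Because $G$ is countable, every infinite-degree vertex in fact has countably infinitely many neighbours, and the cardinal reading of ``at most half monochromatic'' then simplifies sharply: such a vertex $v$ is good precisely when it has infinitely many neighbours of a colour different from its own (if the bichromatic edges at $v$ were only finite in number, the monochromatic ones would be infinite and the condition would fail; if infinite, the two sets have equal cardinality and it holds). So the task at each infinite-degree vertex reduces to securing an infinite \emph{reservoir} of oppositely-coloured neighbours. My plan would be to enumerate the (countably many) infinite-degree vertices and proceed recursively, assigning each one a colour together with an infinite set of its neighbours committed to differ from it, while interleaving this with the compactness construction above so that the finite-degree vertices are still met.

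The main obstacle — and the reason this remains open — is the competition between infinite-degree vertices over shared neighbours. A neighbour $w$ adjacent to two infinite-degree vertices $u$ and $v$ may be required to differ from both; if $u$ and $v$ have been given the opposite colours $a$ and $b$ and the list $L(w)$ is exactly $\{a,b\}$, then $w$ cannot avoid both, and with only two colours in hand there is no slack to repair the clash. This is precisely the room that a third colour provides — whether the global palette of the Shelah--Milner $3$-colouring or the extra list element exploited in the size-$3$ results — and its absence is what makes the two-colour problem genuinely hard. Indeed, specialising Conjecture~\ref{c:simple} to the constant list $\{1,2\}$ recovers the Unfriendly Partition Conjecture verbatim, so any complete proof must in particular resolve that long-standing question; I would therefore expect a successful attack to marry the reservoir idea with the structural case distinctions already developed for unfriendly partitions — finitely many infinite-degree vertices, rayless graphs, or the absence of a subdivided infinite clique — rather than to succeed through any single uniform argument.
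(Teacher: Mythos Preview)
This statement is Conjecture~\ref{c:simple} in the paper, and the paper does \emph{not} prove it; it is presented as an open problem, the list-colouring analogue of the Unfriendly Partition Conjecture. What the paper does prove is the weaker Theorem~\ref{t:simple}, that every countable graph is majority $3$-choosable.

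Your write-up is not in fact a proof proposal, and you are right that it cannot be: you correctly observe that taking the constant list $\{1,2\}$ recovers the Unfriendly Partition Conjecture, so a proof of Conjecture~\ref{c:simple} would in particular settle that long-standing problem. Your diagnosis of the obstacle --- that with only two list colours a neighbour $w$ shared by two infinite-degree vertices assigned opposite colours has no way to differ from both --- is exactly the point, and is precisely the slack that the third colour provides in the paper's proof of Theorem~\ref{t:simple} via Lemma~\ref{restrict}. The locally finite part of your sketch (compactness plus the flip argument on finite pieces) is correct and is essentially the same compactness mechanism the paper uses for finite-degree vertices; the paper's only additional ingredient is Lemma~\ref{restrict}, which prunes the lists so that every infinite neighbourhood retains infinitely many vertices missing any given colour, and that step genuinely needs lists of size at least $3$.

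So there is no disagreement to adjudicate: the paper offers no proof of this conjecture, and your discussion correctly identifies both why the $3$-choosable argument goes through and why it does not descend to $2$.
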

Kreutzer, Oum, Seymour, van der Zypen and Wood \cite{KOSvW} also extended the concept of majority colourings to digraphs, by requiring that at most half of the outgoing edges from any vertex are monochromatic. In this case three colours are necessary even for some finite digraphs. Anholcer, Bosek and Grytczuk \cite{ABG20} make the following conjectures for countable digraphs.
\begin{conj}\label{c:directed}Any countable digraph is majority $3$-choosable.\end{conj}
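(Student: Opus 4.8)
The plan is to build on the method that establishes the undirected and acyclic cases (the paper's main result) and to extend it to arbitrary countable digraphs, where the only genuinely new feature is the presence of directed cycles. Throughout, a vertex $v$ of finite out-degree $d$ is \emph{satisfied} if at most $d/2$ of its out-neighbours share its colour, while a vertex of infinite out-degree is unsatisfied only if cofinitely many of its out-neighbours share its colour. First I would dispose of the infinite-out-degree vertices: since $|L(v)|=3$ and at most one colour can occur cofinitely often among $v$'s out-neighbours, $v$ can always be given a list colour avoiding that colour (or deferred to a final sweep). The substance of the problem therefore lies with the finite-out-degree vertices, for which I would record the \emph{acyclic core}: if every out-neighbour of $v$ has already been coloured, then among the three colours of $L(v)$ the least frequent one is matched by at most $d/3\le d/2$ out-neighbours, so $v$ can be coloured greedily. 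This is precisely what makes directed acyclic graphs tractable, since a reverse topological order lets every vertex see its out-neighbours' colours before committing.

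Passing to the condensation, I would contract each strongly connected component and process the resulting acyclic quotient in reverse topological order; the arcs between distinct components are then handled by the greedy step above, so the genuine difficulty reduces to colouring \emph{within} a single strongly connected component, where no valid ordering of the out-constraints exists. My approach here would be to adapt the enumeration-plus-recolouring scheme used for the undirected case — an undirected graph is exactly the symmetric digraph, in which every edge is a directed $2$-cycle, so that method already controls a thoroughly cyclic situation. One enumerates the component, colours its vertices one at a time from their lists, and whenever a vertex becomes overloaded one recolours it to one of its two remaining list colours, arguing via a monovariant that the process stabilises to a satisfying colouring in the limit.

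I expect this last step to be the main obstacle, and it is exactly where the directed problem exceeds the undirected one. In the undirected setting each edge constrains both endpoints symmetrically, which is what allows a local potential to decrease under recolouring; along a directed cycle, by contrast, recolouring $v$ to relieve its own out-constraint can simultaneously push an in-neighbour $u$ over its threshold, and the resulting corrections can chase one another around the cycle with no obvious decrease in a naive monovariant. A workable plan must therefore supply a \emph{global} potential, or a transfinite recolouring scheme, that provably damps these cascades, most plausibly by exploiting the list flexibility: each vertex always retains a third, as-yet-unused colour that can serve as an escape. I regard the design of such a monovariant as the crux. Indeed the finite specialisation — the conjecture of Kreutzer, Oum, Seymour, van der Zypen and Wood that every finite digraph is majority $3$-colourable — is itself still open, and majority $3$-choosability implies majority $3$-colourability by taking all lists equal, so any complete argument for the stated conjecture must overcome precisely this cyclic-cascade difficulty.
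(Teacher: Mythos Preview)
The statement you address is Conjecture~\ref{c:directed}, which the paper does not prove; it explicitly remarks that the method ``does not give a good bound for digraphs, where it is an open conjecture to even show that finite digraphs are majority $3$-colourable''. There is therefore no proof in the paper to compare your attempt against. Your proposal is not a proof but an outline, and you correctly locate the gap yourself: the design of a monovariant that damps recolouring cascades around directed cycles is left as the ``crux'', and you observe that even the finite specialisation is open. Since the countable conjecture implies the finite one, any complete argument must settle that open problem; your outline does not, and the step ``arguing via a monovariant that the process stabilises'' is precisely where a proof is absent.

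Two smaller points. The paper's undirected proof is not an ``enumeration-plus-recolouring scheme'' but a compactness argument over majority colourings of finite induced subgraphs, combined with the sublist reduction of Lemma~\ref{restrict} to handle infinite-degree vertices. And your treatment of infinite-out-degree vertices (``deferred to a final sweep'') glosses over the circularity that the colours of a vertex's out-neighbours are not fixed in advance but depend on choices still to be made; the paper's device for the analogous issue is Lemma~\ref{restrict}, which forces certain colours out of many neighbours' sublists irrespective of how those neighbours are eventually coloured.
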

\begin{conj}\label{c:acyclic}Any countable acyclic digraph is majority $2$-choosable.\end{conj}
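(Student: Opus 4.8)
The plan is to exploit the two features that distinguish this setting from the undirected, three-colour one, namely that only out-edges matter and that the digraph is acyclic, in order to save a colour. The crucial observation is a uniform greedy step. Suppose we colour the vertices in an order in which every vertex is coloured only after all of its out-neighbours, and consider a vertex $v$ with $L(v)=\{a,b\}$ whose out-neighbours are already coloured. If $d^+(v)$ is finite, then the out-neighbours coloured $a$ and those coloured $b$ together number at most $d^+(v)$, so one of $a,b$ is shared by at most $d^+(v)/2$ of them; colouring $v$ with that colour satisfies the majority condition at $v$. If $d^+(v)$ is infinite, then the out-neighbours different from $a$ together with those different from $b$ cannot both be finite (that would force all but finitely many out-neighbours to equal both $a$ and $b$), so some choice in $L(v)$ leaves infinitely many out-neighbours bichromatic, again satisfying the condition. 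Thus a \emph{single} greedy rule settles every vertex with only two colours, provided the colouring order is available.

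First I would record the consequences of this observation where the order exists. If the acyclic digraph $D$ has no infinite directed path, then ranking each vertex by the ordinal supremum of one-plus-the-ranks of its out-neighbours gives a well-ordering in which out-neighbours strictly precede; transfinite application of the greedy rule then produces a majority $2$-colouring outright. This already improves on majority $3$-choosability in this case, and isolates the genuine difficulty as the presence of infinite directed paths, along which no suitable well-ordering can exist (the path $v_1\to v_2\to\cdots$ would need each $v_i$ to follow every later $v_j$).

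Second, I would dispose of the out-locally-finite case by compactness, which tolerates infinite directed paths. Give $\prod_{v}L(v)$ the product topology; as each list is finite this space is compact, and when $d^+(v)<\infty$ the majority condition at $v$ is a clopen set depending on finitely many coordinates. A majority colouring is a point of the intersection of all these clopen sets, so by compactness it suffices to satisfy finitely many of them at a time; given finitely many vertices $F$, apply the finite greedy rule in reverse topological order to the induced subdigraph on $F$ together with all out-neighbours of its members (a finite set), and extend arbitrarily. Hence every countable digraph all of whose out-degrees are finite is majority $2$-choosable.

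The remaining, and hardest, case is a vertex $w$ of infinite out-degree lying on an infinite directed path, where both tools fail simultaneously: the greedy rule has no legal order, while the condition at $w$, that infinitely many out-neighbours are bichromatic, is not closed and so is invisible to compactness. The natural plan is to replace it by a closed surrogate: choose for each infinite-out-degree vertex $w$ an infinite \emph{witness} set $A_w\subseteq N^+(w)$ and impose the clopen conditions $\mathrm{col}(u)\neq\mathrm{col}(w)$ for $u\in A_w$, which imply the majority condition at $w$. Because there are only countably many such $w$ and each $N^+(w)$ is infinite, a simple diagonal construction yields pairwise disjoint witness sets, so that the surrogate conditions form a forest of binary constraints that could in principle be fed into the compactness argument alongside the finite-out-degree conditions. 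The main obstacle is the feasibility of the resulting finite constraint satisfaction problems: forcing a witness $u$ to avoid $\mathrm{col}(w)$ fixes the colour of $u$, which may clash with the majority condition at $u$ or propagate a further forced colour to a witness of $u$, and one must show that acyclicity prevents these cascades from interlocking into an unsatisfiable system --- for instance by choosing the witness sets to consist of out-neighbours whose own out-structure is already controlled, or by interleaving the surrogate choices with the greedy ranking so that each forced colour is imposed before the choices that depend on it. Making this coordination work is exactly the content of the conjecture.
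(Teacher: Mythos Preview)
The statement you are addressing is Conjecture~\ref{c:acyclic}, which the paper records as an \emph{open} conjecture of Anholcer, Bosek and Grytczuk; the paper does not prove it. What the paper does prove for acyclic digraphs is the weaker Theorem~\ref{t:acyclic}, that every countable acyclic digraph is majority $3$-choosable, via Lemma~\ref{restrict} plus compactness. So there is no ``paper's own proof'' of this statement to compare against.

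Your proposal is likewise not a proof, and you say so yourself: the final paragraph explicitly concedes that ``making this coordination work is exactly the content of the conjecture''. The two special cases you do handle are correct. The transfinite greedy argument when $D$ has no infinite directed path is sound (the rank function is well-defined exactly under that hypothesis), and the compactness argument when every out-degree is finite is also valid: the majority condition at a vertex of finite out-degree is indeed clopen in the product space, and your finite-intersection step is fine because for $v\in F$ one has $N^+_D(v)\subseteq S$, so reverse topological order on $D[S]$ reaches $v$ only after all of its true out-neighbours. These are pleasant partial results but do not cover the general case.

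The genuine gap is precisely the one you identify. When a vertex $w$ has infinite out-degree and lies on an infinite directed path, neither tool applies: there is no admissible well-ordering, and the condition ``infinitely many out-neighbours of $w$ are bichromatic'' is a $G_\delta$ but not closed condition, so compactness cannot see it. Your proposed surrogate---fixing disjoint infinite witness sets $A_w\subseteq N^+(w)$ and imposing $\chi(u)\neq\chi(w)$ for $u\in A_w$---does produce closed conditions, but you have not shown that the resulting system is finitely satisfiable. With lists of size $2$, forcing $\chi(u)\neq\chi(w)$ determines $\chi(u)$ completely whenever $L(u)=L(w)$, and this forced value may conflict with the majority condition at $u$ or with a further witness constraint at $u$; acyclicity of $D$ does not obviously rule out cycles in the constraint graph generated this way. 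Until that feasibility is established, the argument is a plan rather than a proof, and the conjecture remains open exactly as the paper states.
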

They make progress towards these conjectures by proving that every countable graph and every countable digraph is majority $4$-choosable. We make further progress towards Conjectures \ref{c:simple} and \ref{c:acyclic} by showing that every countable graph and every countable acyclic digraph is majority $3$-choosable. In fact our proofs work in the more general setting of majority correspondence colouring, which we may define in the spirit of Dvo\v{r}\'ak and Postle \cite{DP18}. However, for ease of reading we present our main results for majority choosability, and defer remarks on the correspondence setting to the end of the paper.

\section{Main results}
We will use the following (slightly modified) lemma of Anholcer, Bosek and Grytczuk \cite{ABG20}; we give a short proof for completeness.
\begin{lemma}\label{restrict}Let $V$ be a countable set and $\mathcal X$ be a countable (possibly finite) collection of infinite subsets of $V$. Suppose that each $v\in V$ has a list $L(v)$ of $\ell+1$ colours. Then there is a choice of $\ell$-element sublists $L'(v)\subset L(v)$ such that for every colour $c$ and set $X\in\mathcal X$ there are infinitely many $v\in X$ such that $c\not\in L'(v)$.\end{lemma}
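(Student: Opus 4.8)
The plan is to recast the problem as choosing, for each $v\in V$, a single colour $d(v)\in L(v)$ to discard, setting $L'(v)=L(v)\setminus\{d(v)\}$; then for a colour $c$ and a set $X\in\mathcal X$ the condition ``$c\notin L'(v)$'' holds exactly when $c\notin L(v)$ or $d(v)=c$. I would first dispose of the easy pairs $(c,X)$: if $c$ lies in no list, or more generally if infinitely many $v\in X$ have $c\notin L(v)$, the requirement is automatically met however $d$ is chosen. So it suffices to handle the \emph{critical} pairs $(c,X)$, meaning those for which $c$ occurs in some list and all but finitely many $v\in X$ have $c\in L(v)$; for each such pair I need to arrange $d(v)=c$ for infinitely many $v\in X$. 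Since every list is finite and $V$ and $\mathcal X$ are countable, there are only countably many candidate pairs $(c,X)$.

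The construction is a straightforward diagonalisation. Fix an enumeration $(c_1,X_1),(c_2,X_2),\dots$ of all pairs consisting of a colour used by some list together with a member of $\mathcal X$, arranged so that each such pair appears infinitely often. Build $d$ in stages, with $d$ initially undefined everywhere; at stage $j$, if $X_j$ contains a vertex $v$ with $c_j\in L(v)$ and $d(v)$ not yet defined, choose one such $v$ and set $d(v):=c_j$. When the stages are complete, extend $d$ to the remaining vertices of $V$ arbitrarily (each gets some colour from its own list) and put $L'(v)=L(v)\setminus\{d(v)\}$.

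The point to check — and the only place any care is needed — is that this interleaving prevents one colour's demands from exhausting a set $X$ before another colour can be served. At the start of any stage only finitely many vertices have been assigned a value of $d$, since each stage assigns at most one. If $(c_j,X_j)$ is critical then all but finitely many $v\in X_j$ have $c_j\in L(v)$, so infinitely many such $v$ remain unassigned and the choice at stage $j$ can indeed be made. Hence every critical pair $(c,X)$ is treated at infinitely many stages, and each time a fresh vertex of $X$ is given $d$-value $c$; these vertices are distinct, so infinitely many $v\in X$ satisfy $c\notin L'(v)$. Combined with the automatic treatment of the non-critical pairs, this yields the conclusion. I expect the write-up to be short; the only thing to get right is the bookkeeping guaranteeing that a valid choice exists at every stage, which is precisely the finiteness-of-lists observation above.
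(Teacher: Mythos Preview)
Your proof is correct and follows essentially the same diagonalisation as the paper: enumerate the countably many requirements $(c,X)$ each infinitely often, and at each step reserve a fresh vertex of $X$ for the discard $c$. The paper avoids your critical/non-critical case split by simply picking any unused $v\in X$ at each step (setting $L'(v)=L(v)\setminus\{c\}$ only when $c\in L(v)$), which makes the verification slightly shorter, but the idea is the same.
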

\begin{proof}Note that the set $C:=\bigcup_{v\in V}L(v)$ of all colours is countable. Fix an ordering of the countable set $\mathcal X\times C\times \N$. For each triple $(X,c,i)$ in turn, choose any $v\in X$ which has not previously been chosen (which is possible since $X$ is infinite). If $c\in L(v)$, set $L'(v)=L(v)\setminus \{c\}$. Finally, arbitrarily choose any sublist $L'(v)$ which has not previously been defined.

Note that for every $(X,c)$ every element $v$ that was chosen for a triple of the form $(X,c,i)$ has the property that $v\in X$ and $c\not\in L'(v)$. There are infinitely many such elements, and so these sublists have the required property.
\end{proof}
In the proof of our main result we use the notation $N(v)$ and $N[v]$ for the open and closed neighbourhoods respectively of a vertex $v$.
\begin{theorem}\label{t:simple}Every countable graph is majority $3$-choosable.\end{theorem}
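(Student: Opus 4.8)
The plan is to build the colouring greedily along a fixed enumeration $v_1,v_2,\dots$ of $V(G)$, after first using Lemma~\ref{restrict} to neutralise the vertices of infinite degree so that only the finite-degree vertices impose genuine constraints.

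First I would split $V(G)$ into the set $I$ of vertices of (necessarily countably) infinite degree and the set of finite-degree vertices. For each $v\in I$ I would reserve, by a routine stage-by-stage construction, an infinite set $R_v\subseteq N(v)$, taking the $R_v$ pairwise disjoint and, wherever possible, inside $I$, so that as few finite-degree vertices as possible lie in $\bigcup_{v\in I}R_v$. Applying Lemma~\ref{restrict} with $\ell=2$ to the countable family $\{R_v:v\in I\}$ inside $\bigcup_{v\in I}R_v$ yields size-$2$ sublists $L'(w)$ on those vertices --- all other lists, in particular the lists of all finite-degree vertices outside $\bigcup_{v\in I}R_v$, retaining size $3$ --- with the property that for every colour $c$ and every $v\in I$ there are infinitely many $w\in R_v$ with $c\notin L'(w)$. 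The point is that however we eventually colour $v\in I$, infinitely many of its neighbours are forced to a different colour, which already meets the (cardinality version of the) majority condition at $v$. So it suffices to produce a colouring from the available lists under which every \emph{finite-degree} vertex has at most half of its incident edges monochromatic.

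For that I would colour the vertices one at a time, maintaining the invariant that every already-coloured finite-degree vertex $v_i$ can \emph{still} be satisfied, i.e.\ $m_i\le b_i+u_i$, where $m_i,b_i,u_i$ count the monochromatic, bichromatic and still-uncoloured edges at $v_i$; once $u_i=0$ this is exactly the required conclusion. When colouring $v_n$ it has only finitely many coloured neighbours, the invariant forbids the colour of each \emph{tight} finite-degree neighbour (one whose slack $b_i+u_i-m_i$ is at most $1$), and among the colours of its list that survive I would pick one minimising the number of fresh monochromatic edges created at $v_n$. A short count shows that if at least two colours survive, this choice preserves the invariant at $v_n$, and it automatically preserves it at every non-tight neighbour; so the construction succeeds provided no vertex is ever reached with fewer than two usable colours --- and, for a finite-degree vertex carrying a reduced (size-$2$) list, with no forbidden colour at all, which is why I would arrange to colour those particular vertices before most of their neighbours.

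The crux is therefore deadlock-avoidance: ensuring that when $v_n$ is coloured at most one colour of its size-$3$ list is blocked. The naive enumeration does not achieve this, so I would instead process the vertices in a carefully chosen order (or with a priority/look-ahead rule) designed so that when a vertex first becomes tight, the single-colour constraint it then imposes on its remaining uncoloured neighbours is distributed so that no vertex ever accumulates constraints in two colours before it is coloured, and so that the few finite-degree vertices with reduced lists are dealt with early. This is exactly where the third colour is indispensable --- it is the slack that lets a ``minimum-monochromatic'' choice coexist with one blocked colour. I expect establishing this deadlock-freeness to be the main difficulty; once it is in place, the pointwise-stable colouring produced by the construction is a majority colouring, and running the same argument with out-neighbourhoods in place of neighbourhoods yields the result for countable acyclic digraphs.
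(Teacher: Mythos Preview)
Your proposal has a genuine gap at the deadlock-avoidance step, which you yourself flag as ``the main difficulty'' without resolving. The invariant $m_i\le b_i+u_i$ forces you, when colouring $v_n$, to avoid the colour of every already-coloured tight finite-degree neighbour; but nothing prevents $v_n$ from having tight neighbours in two (or three) distinct colours. Tightness depends on a vertex's whole neighbourhood and is not controllable by local choices at $v_n$, and there is no evident enumeration of a general countable graph guaranteeing that each vertex meets tight neighbours in at most one colour at the moment it is processed. Saying you would ``process the vertices in a carefully chosen order'' is not a proof: you would have to exhibit such an order and verify it works, and it is far from clear one exists. The additional complication of reserving disjoint sets $R_v$ and reducing only some lists to size $2$ makes matters worse, since any finite-degree vertex inside $\bigcup_{v\in I} R_v$ then carries a size-$2$ list and cannot tolerate even a single blocked colour.

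The paper sidesteps all of this with a compactness argument. It reduces \emph{every} list to size $2$ via Lemma~\ref{restrict} (applied to the full neighbourhoods $N(v)$ for $v$ of infinite degree, not to disjoint reserved subsets), takes for each finite induced subgraph $G_n$ a majority colouring $\chi_n$ from those sublists --- which exists simply because any colouring minimising the number of monochromatic edges is a majority colouring --- and extracts a limit colouring $\chi$ by a diagonal argument. Finite-degree vertices are then satisfied because some $\chi_m$ agrees with $\chi$ on their entire closed neighbourhood; infinite-degree vertices are satisfied by the sublist property. Compactness is exactly what replaces your missing deadlock-avoidance: it lets the easy finite case do all the work without ever committing to irrevocable greedy choices.
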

\begin{proof}Apply Lemma \ref{restrict} with $\ell=2$, where $\mathcal X=\{N(v_i)\mid d(v_i)=\infty\}$, to obtain a system of lists $\bigl(L'(v)\bigr)_{v\in V(G)}$. Order the vertices $v_1,v_2,\ldots,$ and for each $n\in\N$ consider the subgraph $G_n$ induced by $v_1,\ldots,v_n$. Fix a colouring $\chi_n$ Since it is finite, $G_n$ is majority $2$-choosable: any colouring which minimises the total number of monochromatic edges is a majority colouring. In particular, there is a colouring $\chi_n$ which colours $v_i$ from $L'(v_i)$ for each $i\leq n$ which is a majority colouring for $G_n$.

We now use a compactness argument. Infinitely many of the colourings $(\chi_n)_{n\in\N}$ agree on the colour of $v_1$; let $\chi(v_1)$ be this colour. Of these, infinitely many also agree on the colouring of $v_2$; let $\chi(v_2)$ be this colour. Continuing in this manner we get a colouring $\chi$ of $V(G)$ in which $v_i$ receives a colour from $L'(v_i)$ for each $i$, and such that for each $n\in N$ there exists $m\geq n$ such that $\chi(v_i)=\chi_m(v_i)$ for every $i\leq n$.

We claim that $\chi$ is a majority colouring. Indeed, if $v_i$ has finite degree then, writing $n=\max\{j:v_j\in N[v_i]\}$, there exists $m\geq n$ such that $\chi_m(v_j)=\chi(v_j)$ for every $j\leq n$. In particular, these two colourings agree on $v_i$ and all its neighbours, and since $\chi_m$ is a majority colouring on $G_m$, at most half of the edges containing $v_i$ are monochromatic in $\chi_m$ and hence $\chi$. Alternatively, if $v_i$ has infinite degree then by taking $X=N(v_i)$ and $c=\chi(v_i)$ in the definition of the sublists $L'(v_j)$, $v_i$ has infinitely many neighbours $v_j$ for which $\chi(v_i)\not\in L'(v_j)$, and so $\chi(v_j)\neq\chi(v_i)$. Thus the conditions of a majority colouring are satisfied at every vertex.
\end{proof}
This proof does not give a good bound for digraphs, where it is an open conjecture to even show that finite digraphs are majority $3$-colourable \cite{KOSvW}. However, in the much simpler case of acyclic digraphs we can use this method. Any finite acyclic digraph is majority $2$-choosable, since we may colour vertices in reverse topological ordering (so that each vertex is processed after all its outneighbours), giving each vertex the colour from its list which is less common among its neighbours. The proof above, replacing neighbourhoods and degrees by outneighbourhoods and outdegrees throughout, therefore gives the following bound.
\begin{theorem}\label{t:acyclic}Every countable acyclic digraph is majority $3$-choosable.\end{theorem}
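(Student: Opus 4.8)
The plan is to mimic the proof of Theorem~\ref{t:simple} almost verbatim, replacing every occurrence of ``neighbourhood'' by ``outneighbourhood'' and ``degree'' by ``outdegree''. Concretely, I would apply Lemma~\ref{restrict} with $\ell=2$ and $\mathcal X=\{N^+(v_i)\mid d^+(v_i)=\infty\}$, where $N^+$ and $d^+$ denote outneighbourhood and outdegree, to obtain sublists $\bigl(L'(v)\bigr)_{v\in V(G)}$ of size $2$. Order the vertices $v_1,v_2,\ldots$ and let $G_n$ be the sub(di)graph induced by $v_1,\ldots,v_n$.

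The one genuinely new ingredient — the step I expect to be the only real obstacle, though it is an easy one — is to establish that every \emph{finite} acyclic digraph is majority $2$-choosable, since the compactness argument needs finite approximations with the desired property. For this I would process the vertices of $G_n$ in reverse topological order, so that when a vertex $v$ is coloured all of its outneighbours in $G_n$ have already been coloured; then choose for $v$ the colour of $L'(v)$ that appears on the fewest of its already-coloured outneighbours, which by the pigeonhole principle is at most half of them. This yields a majority colouring $\chi_n$ of $G_n$ respecting the sublists. (Unlike in the undirected case, the ``minimise the number of monochromatic edges'' argument is not available, because flipping a vertex's colour can worsen the situation at its inneighbours; the topological-order greedy argument is precisely what makes acyclicity essential here.)

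With the $\chi_n$ in hand, the compactness argument is identical to that in Theorem~\ref{t:simple}: pass to a subsequence on which the colour of $v_1$ stabilises, then a further subsequence stabilising $v_2$, and so on, producing a colouring $\chi$ with the property that for every $n$ there is $m\ge n$ with $\chi(v_i)=\chi_m(v_i)$ for all $i\le n$. To verify $\chi$ is a majority colouring, fix $v_i$. If $d^+(v_i)<\infty$, put $n=\max\{j:v_j\in N^+[v_i]\}$ and pick $m\ge n$ with $\chi$ and $\chi_m$ agreeing on $v_1,\ldots,v_n$; then they agree on $v_i$ and all its outneighbours, and since $\chi_m$ is a majority colouring of $G_m$ at most half the outgoing edges at $v_i$ are monochromatic under $\chi$. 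If $d^+(v_i)=\infty$, take $X=N^+(v_i)$ and $c=\chi(v_i)$ in Lemma~\ref{restrict}: there are infinitely many outneighbours $v_j$ with $\chi(v_i)\notin L'(v_j)$, hence $\chi(v_j)\ne\chi(v_i)$, so at most half (in the cardinality sense) of the outgoing edges at $v_i$ are monochromatic. Thus $\chi$ is a majority colouring and $G$ is majority $3$-choosable.
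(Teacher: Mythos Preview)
Your proposal is correct and follows essentially the same route as the paper: replace neighbourhoods and degrees by outneighbourhoods and outdegrees in the proof of Theorem~\ref{t:simple}, with the one new ingredient being that finite acyclic digraphs are majority $2$-choosable via the reverse-topological-order greedy argument. Your aside explaining why the ``minimise monochromatic edges'' trick is unavailable for digraphs is a nice clarification that the paper leaves implicit.
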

This strengthens an earlier result of Anholcer, Bosek and Grytczuk \cite{ABG19} that countable acyclic digraphs are majority $3$-colourable.
\section{Comparison with the result of Shelah and Milner}
Suppose we are primarily interested in majority $3$-colourability of countable graphs. This is the intersection of our result and that of Shelah and Milner \cite{SM90}, and so it is natural to compare the two proofs for that case. From \cite{SM90} we can extract a proof of majority $3$-colourability for countable graphs which is substantially shorter than that of their full result, and which we describe informally as follows.

\medskip
For a countable graph $G$ on vertex set $V$, let $B_0$ be the set of all vertices of finite degree. We recursively define $B_\alpha$ for ordinals $\alpha$ as follows: $B_\alpha$ is the set of all vertices not in $\bigcup_{\beta<\alpha}B_\beta$, but with infinitely many neighbours in that set. Stop this process at the first ordinal $\gamma$ such that $B_\gamma$ is empty, and let $C:=V\setminus\bigcup_{\beta<\gamma}B_\beta$ be the set of leftover vertices. Note that in the induced subgraph $G[C]$ all vertices have infinite degree, so (by a result of Aharoni, Milner and Prikry \cite{AMP}) it has a majority $2$-colouring, which we use to define colours of vertices in $C$. For each $\alpha>1$, since every vertex in $B_\alpha$ has infinitely many neighbours in $\bigcup_{1\leq\beta<\alpha}B_\beta$, it is sufficient to colour it differently from infinitely many neighbours in this set. We may do this inductively: colour $B_1$ with a single colour $c$ and for each $\alpha>1$ colour each vertex in $B_\alpha$ differently to the majority of its neighbours in $\bigcup_{1\leq\beta<\alpha}B_\beta$. Finally, we colour vertices in $B_0$. By a compactness argument, for any given colouring of $V\setminus B_0$, we can $2$-colour the vertices in $B_0$ such that each vertex has at most half its edges (in $G$) monochromatic. We may choose such a colouring using only the two colours other than $c$, and this ensures that each vertex in $B_1$ has infinitely many neighbours in $B_0$ of a different colour.

\medskip
Our proof of Theorem \ref{t:simple} is perhaps simpler than this, and although their proof has the significant advantage that it can be generalised to the uncountable case, ours has the more modest advantage that it gives majority $3$-choosability rather than just majority $3$-colourability. Both of these advantages are genuine. The proof of Shelah and Milner relies on the fact that we are colouring from a fixed palette, rather than list colouring, to give every vertex in $B_1$ the same colour. However, Lemma \ref{restrict} relies on $V$ and $\mathcal X$ being countable; if they may be uncountable then we may choose $\mathcal X$ to be all infinite subsets of a given countable subset of $V$, and the result will not hold. Nevertheless, we conjecture that the union of the two results is true.
\begin{conj}Every graph is majority $3$-choosable.\end{conj}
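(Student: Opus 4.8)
The plan is to marry the transfinite stratification of Shelah and Milner \cite{SM90} with the list-colouring machinery behind Theorem~\ref{t:simple}. Given an arbitrary graph $G$, I would first build a cardinality-sensitive refinement of the Shelah--Milner strata: let $B_0$ be the set of vertices of finite degree, recursively let $B_\alpha$ (for $\alpha\ge1$) be the set of vertices $v\notin\bigcup_{\beta<\alpha}B_\beta$ with $\bigl|N(v)\cap\bigcup_{\beta<\alpha}B_\beta\bigr|=|N(v)|$, stop at the first $\gamma$ with $B_\gamma=\emptyset$, and put $C=V(G)\setminus\bigcup_{\beta<\gamma}B_\beta$, so that in $G[C]$ every vertex has the same (infinite) degree as in $G$. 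As in the Shelah--Milner argument, one checks that each $v\in B_1$ has $|N(v)|$ neighbours in $B_0$, and each $v\in B_\alpha$ with $\alpha\ge2$ has $|N(v)|$ neighbours in $\bigcup_{1\le\beta<\alpha}B_\beta$.

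Next I would colour in stages. Colour $B_1$ arbitrarily from the lists. Colour $G[C]$ from the lists so that each vertex sees $|N(v)|$ neighbours of a different colour. Then colour $B_2,B_3,\dots$ in increasing order of the stratum: when colouring $v\in B_\alpha$, its $|N(v)|$ neighbours in $\bigcup_{1\le\beta<\alpha}B_\beta$ are already coloured, and a three-colour pigeonhole argument lets one pick a colour from $L(v)$ that differs from $|N(v)|$ of them (if two colours each occur $|N(v)|$ times among those neighbours, take any list colour; otherwise avoid the unique abundant colour). Finally colour $B_0$. Every vertex of $B_0$ has finite degree and all of its neighbours outside $B_0$ are now coloured, so a compactness argument --- the constraint ``at most half of the edges at $u$ are monochromatic'' has finite character --- produces a colouring of $B_0$ from the lists satisfying this condition at every $u\in B_0$. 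It only remains to arrange, in addition, that for each $v\in B_1$ the set $\{u\in N(v)\cap B_0:\chi(u)\neq\chi(v)\}$ has cardinality $|N(v)|$; this is what makes each vertex of $B_1$ satisfied.

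The hard part will be securing this last condition (and the analogous requirement for $G[C]$) --- that is, colouring $B_0$ so that every one of the (possibly uncountably many) infinite neighbourhoods $N(v)\cap B_0$ with $v\in B_1$ sees its vertex's colour only rarely. The two tools that work in the known cases both break here: the Shelah--Milner argument finesses the condition by giving all of $B_1$ a single common colour and colouring $B_0$ from the other two colours, which uses essentially that one colours from a fixed palette; and Theorem~\ref{t:simple} secures it via Lemma~\ref{restrict}, which genuinely fails for uncountable $V$ (for instance when uncountably many colours appear in the lists of one countably infinite set $N(v)\cap B_0$ shared by many vertices of $B_1$). One mildly encouraging observation is that a single $u\in B_0$ can always avoid any one prescribed colour while keeping at most half of its edges monochromatic, since at most one colour can occur on more than half of $u$'s neighbours; so the obstacle is purely the global coordination across overlapping infinite sets. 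I would attack it by a transfinite recursion that colours $B_0$ in blocks while maintaining, as an invariant, that each $v\in B_1$ retains a full-cardinality reservoir of still-uncommitted $B_0$-neighbours --- but I expect reconciling such an invariant with the compactness step needed for the majority condition at $B_0$ to be exactly where a new idea is required, and this is why the statement is only conjectured.
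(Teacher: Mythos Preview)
The statement you address is presented in the paper as an open \emph{conjecture}, not a theorem; the paper offers no proof of it, so there is no argument to compare yours against. What the paper does do is explain exactly why neither of the two available techniques extends: the Shelah--Milner transfinite argument relies on giving all of $B_1$ a single common colour (hence on a fixed palette), while Lemma~\ref{restrict} genuinely fails once $V$ or $\mathcal X$ is uncountable. Your outline reproduces this diagnosis accurately and in more detail, and your closing paragraph is honest that the scheme is incomplete.

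To be explicit about where the gaps lie in your plan: the step ``colour $G[C]$ from the lists so that each vertex sees $|N(v)|$ neighbours of a different colour'' is itself unproved in the uncountable list setting (Aharoni--Milner--Prikry give only a non-list $2$-colouring), and the step for $B_0$ --- colouring finitely-many-degree vertices from lists while simultaneously ensuring that each of possibly uncountably many $v\in B_1$ has $|N(v)|$ differently-coloured neighbours in $B_0$ --- is precisely the coordination problem that neither Lemma~\ref{restrict} nor the fixed-palette trick resolves. Your pigeonhole step for $B_\alpha$ with $\alpha\ge2$ is fine (indeed two list colours suffice there), and your observation that any single $u\in B_0$ can avoid one prescribed colour is correct but, as you note, does not by itself yield the global statement. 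In short, your proposal is not a proof and does not pretend to be one; it is a correct identification of why the conjecture remains open.
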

\section{Extensions}
\subsection{$1/k$-majority colouring}
Kreutzer, Oum, Seymour, van der Zypen and Wood \cite{KOSvW} generalised majority colouring (in the setting of digraphs) by asking how many colours are needed to colour a finite digraph such that every vertex has the same colour as at most a $1/k$ proportion of its outneighbours, where $k\in\N$. They call such a colouring a \textit{$(1/k)$-majority colouring}. Gir\~ao, Kittipassorn and Popielarz \cite{GKP} observed that at least $2k-1$ colours may be required, and proved that $2k$ colours is sufficient. In fact they showed this in the more general list-colouring setting, that is, every finite digraph is $(1/k)$-majority $2k$-choosable, and Knox and \v{S}\'amal \cite{KS18} independently proved the same result.

For finite simple graphs, the corresponding notion is much more straightforward: it is easy to see that for any finite graph and any system of lists of size $k$, there is a colouring $\chi$ for which each vertex is the same colour as at most a $1/k$ proportion of its neighbours, and in fact each vertex $v$ has at least as many neighbours of colour $c$ as colour $\chi(v)$ for each $c\in L(v)$ (any colouring minimising the number of monochromatic edges has this property). Even more simply, any finite acyclic digraph is $(1/k)$-majority $k$-choosable.

Our methods give corresponding, but weaker, results for countable graphs in all of these settings. Suppose that any finite induced subgraph of a countable graph (or digraph) $G$ is $(1/k)$-majority $\ell$-choosable. Applying Lemma \ref{restrict} with this value of $\ell$, and mimicking the proof of Theorems \ref{t:simple} and \ref{t:acyclic}, we see that $G$ is $(\ell+1)$-choosable. Together with the observations above for finite simple graphs and acyclic digraphs, we obtain the following generalisation of Theorems \ref{t:simple} and \ref{t:acyclic}.
\begin{theorem}\label{t:frac}For each $k\geq 2$, any countable graph or countable acyclic digraph is $(1/k)$-majority $(k+1)$-choosable.\end{theorem}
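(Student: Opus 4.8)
The plan is to carry out, with $2$ replaced by $k$, precisely the argument used for Theorems \ref{t:simple} and \ref{t:acyclic}, feeding in the finite $(1/k)$-majority choosability facts noted just above in place of the finite majority choosability facts used there. Indeed the general reduction ``if every finite induced subgraph of a countable (di)graph is $(1/k)$-majority $\ell$-choosable, then the whole (di)graph is $(1/k)$-majority $(\ell+1)$-choosable'' is exactly the content of those two proofs, and here we apply it with $\ell=k$.

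First I would pin down the finite inputs. For a finite simple graph with lists of size $k$, take a colouring minimising the number of monochromatic edges; then for every vertex $v$ and every $c\in L(v)$ at least as many neighbours of $v$ are coloured $c$ as are coloured $\chi(v)$, since otherwise recolouring $v$ with $c$ would do better; summing this over the $k$ colours of $L(v)$ and comparing with $d(v)$ shows at most $d(v)/k$ neighbours of $v$ are monochromatic. For a finite acyclic digraph, colour the vertices in reverse topological order, giving each vertex $v$, once all its outneighbours have been coloured, a colour of $L(v)$ realised by the fewest of them; pigeonhole bounds that number by $d^+(v)/k$. Hence every finite induced graph, and every finite acyclic subdigraph, is $(1/k)$-majority $k$-choosable.

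Next I would apply Lemma \ref{restrict} with $\ell=k$ and with $\mathcal X=\{N(v_i)\mid d(v_i)=\infty\}$ in the graph case (respectively $\mathcal X=\{N^+(v_i)\mid d^+(v_i)=\infty\}$ in the acyclic digraph case, all of which are countably infinite since $G$ is countable), obtaining size-$k$ sublists $L'(v)$ such that for every colour $c$ and every $X\in\mathcal X$ infinitely many $v\in X$ have $c\notin L'(v)$. With the vertices ordered $v_1,v_2,\dots$, each subgraph $G_n$ induced on $v_1,\dots,v_n$ is finite, hence by the previous paragraph has a $(1/k)$-majority colouring $\chi_n$ with $\chi_n(v_i)\in L'(v_i)$ for every $i\le n$; the same compactness argument as in the proof of Theorem \ref{t:simple} then yields a colouring $\chi$ of $V(G)$ with $\chi(v_i)\in L'(v_i)$ throughout and, for each $n$, some $m\ge n$ with $\chi$ and $\chi_m$ agreeing on $v_1,\dots,v_n$.

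Finally I would verify the $(1/k)$-majority property of $\chi$ vertex by vertex, exactly as in Theorem \ref{t:simple}: if $v_i$ has finite (out)degree, take $m$ large enough that $\chi_m$ agrees with $\chi$ on $v_i$ and all its (out)neighbours, so at most $d(v_i)/k$ of them are monochromatic under $\chi$ because $\chi_m$ is a $(1/k)$-majority colouring of the finite (di)graph $G_m$; and if $v_i$ has infinite (out)degree, Lemma \ref{restrict} applied with $X$ the (out)neighbourhood of $v_i$ and $c=\chi(v_i)$ gives infinitely many (out)neighbours $v_j$ with $\chi(v_i)\notin L'(v_j)$, hence $\chi(v_j)\ne\chi(v_i)$, a non-monochromatic set of the same (countable) cardinality as the whole (out)neighbourhood, which is the required condition there. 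There is essentially no obstacle beyond bookkeeping; the only points needing a little care are extracting the exact $1/k$ bound in the two finite base cases (via the minimisation and pigeonhole arguments above) and using countability of $G$ to identify ``infinitely many'' with ``a set of full cardinality in the (out)neighbourhood'' at infinite-degree vertices, which is what lets the argument of Theorem \ref{t:simple} transfer verbatim.
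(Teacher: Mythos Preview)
Your proposal is correct and follows essentially the same approach as the paper: the paper states the general reduction (apply Lemma \ref{restrict} with $\ell=k$ and mimic the compactness argument of Theorems \ref{t:simple} and \ref{t:acyclic}) and invokes the same two finite base cases you spell out, namely the edge-minimisation argument for graphs and the reverse-topological greedy argument for acyclic digraphs. You have simply written out in full the details that the paper leaves implicit.
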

In addition, using the result of \cite{GKP} and \cite{KS18} for finite digraphs, we obtain the following bound for countable digraphs; note, however, that for $k=2$ this gives a weaker bound than that established by Anholcer, Bosek and Grytczuk \cite{ABG20}.
\begin{theorem}For each $k\geq 2$, every countable digraph is $(1/k)$-majority $(2k+1)$-choosable.\end{theorem}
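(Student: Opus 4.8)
The plan is to run the argument behind Theorems~\ref{t:simple} and~\ref{t:acyclic} essentially unchanged, but feeding in a stronger finite bound than the trivial ones used there. The relevant finite input is the theorem of Gir\~ao, Kittipassorn and Popielarz~\cite{GKP} (equivalently, of Knox and \v{S}\'amal~\cite{KS18}) that every finite digraph is $(1/k)$-majority $2k$-choosable. So in the general scheme described just above one takes $\ell=2k$; since that scheme outputs $(1/k)$-majority $(\ell+1)$-choosability, this yields exactly the claimed bound $2k+1$.

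In detail, I would start from a countable digraph $D$ with lists $\bigl(L(v)\bigr)_{v\in V(D)}$ of size $2k+1$ and apply Lemma~\ref{restrict} with $\ell=2k$ and $\mathcal X=\{N^+(v)\mid d^+(v)=\infty\}$ to obtain size-$2k$ sublists $\bigl(L'(v)\bigr)_{v\in V(D)}$. After fixing an enumeration $v_1,v_2,\ldots$ of $V(D)$ and letting $D_n$ be the subdigraph induced by $v_1,\ldots,v_n$, the result of~\cite{GKP,KS18} supplies for each $n$ a colouring $\chi_n$ with $\chi_n(v_i)\in L'(v_i)$ for all $i\le n$ under which every vertex of $D_n$ has at most a $1/k$ proportion of its out-edges monochromatic. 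I would then run the compactness argument of Theorem~\ref{t:simple} verbatim — successively pass to an infinite subsequence of $(\chi_n)$ on which the colour of $v_1$, then of $v_2$, and so on, is constant — to obtain a colouring $\chi$ from the lists $L'$ with the property that for every $n$ there is some $m\ge n$ with $\chi(v_i)=\chi_m(v_i)$ for all $i\le n$.

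To verify that $\chi$ is a $(1/k)$-majority colouring I would split into the two cases of Theorem~\ref{t:simple}. If $v_i$ has finite out-degree, choose $n$ with $N^+[v_i]\subseteq\{v_1,\ldots,v_n\}$ and $m\ge n$ on which $\chi$ and $\chi_m$ agree on $\{v_1,\ldots,v_n\}$; then $v_i$ and all of its out-neighbours lie in $D_m$ with the same colours under $\chi$ and $\chi_m$, so the $(1/k)$-majority property of $\chi_m$ on $D_m$ transfers to $v_i$ under $\chi$. If $v_i$ has infinite out-degree, apply the conclusion of Lemma~\ref{restrict} with $X=N^+(v_i)$ and $c=\chi(v_i)$: infinitely many out-neighbours $v_j$ satisfy $\chi(v_i)\notin L'(v_j)$, hence $\chi(v_j)\ne\chi(v_i)$, so the non-monochromatic out-neighbours of $v_i$ form a set of the same (countable) cardinality as $N^+(v_i)$, which is the infinite-degree requirement under the same convention as in Theorem~\ref{t:simple}.

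I do not expect a genuine obstacle here: the proof is pure bookkeeping on top of the cited finite theorem. The only points I would be careful about are that the compactness step is unaffected by the possible presence of directed cycles — it only ever uses agreement of $\chi$ with some $\chi_m$ on the \emph{finite} set $N^+[v_i]$, so no topological ordering is needed, unlike in Theorem~\ref{t:acyclic} — and that the $(1/k)$-majority condition at a vertex of infinite out-degree is interpreted exactly as the majority condition is in Theorem~\ref{t:simple}, namely as requiring the non-monochromatic out-neighbourhood to have full cardinality. As a sanity check, at $k=2$ the argument recovers majority $5$-choosability of countable digraphs, consistent with the remark that this is weaker than the bound of Anholcer, Bosek and Grytczuk~\cite{ABG20}.
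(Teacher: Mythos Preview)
Your proposal is correct and matches the paper's own argument essentially line for line: the paper explicitly says to plug the finite bound of \cite{GKP,KS18} (every finite digraph is $(1/k)$-majority $2k$-choosable) into the general scheme of Lemma~\ref{restrict} plus compactness with $\ell=2k$, mimicking the proof of Theorem~\ref{t:simple} with out-neighbourhoods, exactly as you describe. Your remarks that acyclicity is not needed for the compactness step (only for the finite input in Theorem~\ref{t:acyclic}) and the sanity check at $k=2$ are accurate.
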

\subsection{Majority correspondence colouring}
In this section we discuss an extension of majority choosability based on correspondence (or DP) colouring. Correspondence colouring is a generalisation of list colouring in which for every edge $uv$ there is a set of forbidden (ordered) colour pairs, with every colour at $u$ (or $v$) being in at most one pair. Choosing these pairs to be the monochromatic ones, we recover list colouring. However, correspondence colouring may require more colours than list colouring: for even cycles with lists of size two, a list colouring exists but a correspondence colouring need not.

We define majority correspondence colourings in the same way. Given a graph $G$, equip each vertex $v$ with a list $L(v)$ of $k\geq 2$ colours, and each edge $uv$ with a set of bad pairs $B_{uv}\subset L(u)\times L(v)$, where for every $c\in L(u)$ there is at most one pair $(c,c')\in B_{uv}$, and for every $c\in L(v)$ there is at most one pair $(c',c)\in B_{uv}$. For a colouring $\chi$ we say an edge $uv$ is bad if $(\chi(u),\chi(v))\in B_{uv}$, and we say $\chi$ is a \textit{$(1/k)$-majority correspondence colouring} if for every vertex $v$, at most a $1/k$ proportion of the edges incident with $v$ are bad. Similarly we may define majority correspondence colourings for digraph with respect to the outedges from each vertex. We say that $G$ is $(1/k)$-majority $\ell$-correspondence colourable if for every collection of lists $\mathcal L=\{L(v)\mid v\in V(G)\}$ of size $\ell$ and bad sets $\mathcal B=\{B_{uv}\mid uv\in E(G)\}$ there is a $(1/k)$-majority correspondence colouring of $G$.

Note that every finite graph or acyclic digraph is $(1/k)$-majority $k$-correspondence colourable, with the same proof in each case. To adapt the proof of Theorem \ref{t:frac} to correspondence colouring, we first need to modify the statement of Lemma \ref{restrict}. 
\begin{lemma}\label{new}Let $V$ be a countable set and suppose that each $v\in V$ has a list $L(v)$ of $\ell+1$ colours. Let $\mathcal X$ be a countable (possibly finite) collection of sets, such that each $X\in\mathcal X$ consists of infinitely many pairs $(v,c)$ with $v\in V$ and $c\in L(v)$, all having distinct first elements. Then there is a choice of $\ell$-element sublists $L'(v)\subset L(v)$ such that for every $X\in\mathcal X$ there are infinitely many pairs $(v,c)\in X$ such that $c\not\in L'(v)$.\end{lemma}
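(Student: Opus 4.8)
The plan is to mimic the proof of Lemma~\ref{restrict} almost verbatim, the only change being that the colour to be removed from a vertex's list is now dictated by the pair itself rather than chosen separately. First I would fix an enumeration of the countable set $\mathcal X\times\N$. Processing the pairs $(X,i)$ in this order, at each step I want to select a pair $(v,c)\in X$ whose first coordinate $v$ has not been selected at any earlier step, and then set $L'(v)=L(v)\setminus\{c\}$; since $c\in L(v)$ and $|L(v)|=\ell+1$, this is an $\ell$-element sublist. After all pairs have been processed, I complete the construction by choosing arbitrary $\ell$-element sublists for the (possibly infinitely many) vertices not yet assigned one.

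The only point that needs checking is that a suitable fresh vertex always exists at each step, and this is where the modified hypothesis on $\mathcal X$ does its work. At the step corresponding to $(X,i)$ only finitely many vertices have been selected so far, whereas the assumption that the pairs in $X$ have pairwise distinct first coordinates, together with $X$ being infinite, means that infinitely many distinct vertices occur as a first coordinate of some pair of $X$; hence at least one such vertex is still available, and we pick the corresponding pair. Each vertex is selected at most once overall, so there is no conflict between different steps.

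To verify the conclusion, fix $X\in\mathcal X$. For every $i\in\N$ the pair $(v,c)$ chosen at step $(X,i)$ lies in $X$ and satisfies $c\notin L'(v)$ by construction, and these pairs are distinct for distinct $i$ since their first coordinates are the distinct fresh vertices picked at those steps. Hence there are infinitely many pairs $(v,c)\in X$ with $c\notin L'(v)$, as required.

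I do not expect any genuine obstacle here: the argument is a direct transcription of Lemma~\ref{restrict}, with ``infinite subset $X\subseteq V$'' replaced by ``infinite set $X$ of pairs with distinct first coordinates'', which is precisely the condition needed to keep the supply of unclaimed vertices from running out. If anything merits care, it is only the bookkeeping that ensures each vertex's sublist is defined at most once before the final arbitrary completion.
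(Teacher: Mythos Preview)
Your proof is correct and follows essentially the same approach as the paper: both fix an ordering of $\mathcal X\times\N$, at each step $(X,i)$ pick a pair $(v,c)\in X$ with $v$ not previously used, set $L'(v)=L(v)\setminus\{c\}$, and finish by defining the remaining sublists arbitrarily. Your version is in fact more detailed than the paper's, spelling out explicitly why a fresh vertex is always available and why the conclusion follows.
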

\begin{proof}Fix an ordering of $\mathcal X\times\N$. For each pair $(X,i)$ in turn, choose any pair $(v,c)$ in $X$ such that no pair containing $v$ has previously been chosen, and set $L'(v)=L(v)\setminus\{c\}$. Finally, choose sublists which have not been defined arbitrarily.\end{proof}
Equipped with this version, we may now modify the proof of Theorem \ref{t:simple} to deal with correspondence colouring. To do this, for each pair $(u,c)$ with $u\in V(G)$ and $c\in L(u)$, let $X_{u,c}=\{(v,c'):v\in N(u) \wedge (c,c')\in B_{uv}\}$. Set $\mathcal X$ to be $\{X_{u,c}:\lvert X_{u,c}\rvert=\infty\}$. Note that $\mathcal X$ satisfies the assumptions of Lemma \ref{new}, and so we may choose sublists $\bigl(L'(v)\bigr)_{v\in V(G)}$ in accordance with that lemma. Now define the colouring $\chi$ as in the proof of Theorem \ref{t:simple}. If $v_i$ has finite degree, then choosing an appropriate colouring $\chi_m$ shows that at most the required proportion of edges meeting $v_i$ are bad. If $v_i$ has infinite degree, but $X_{v_i,\chi(v_i)}$ is finite then at most finitely many edges meeting $v_i$ are bad. If $X_{v_i,\chi(v_i)}$ is infinite then by choice of the sublists $v_i$ has infinitely many neighbours $v_j$ such that $v_iv_j$ cannot be bad. The same argument works for acyclic digraphs. Thus we have the following result.
\begin{theorem}For every $k\geq 2$, any countable graph or countable acyclic digraph is $(1/k)$-majority $(k+1)$-correspondence colourable.\end{theorem}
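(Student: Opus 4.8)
The plan is to mirror the proof of Theorem~\ref{t:simple} in the modified form given above for Theorem~\ref{t:frac}, with Lemma~\ref{new} playing the role of Lemma~\ref{restrict}. The base case is the observation recorded above that every finite graph and every finite acyclic digraph is $(1/k)$-majority $k$-correspondence colourable: for a finite graph, any colouring from the lists that minimises the number of bad edges works; for a finite acyclic digraph, colour the vertices in reverse topological order, giving each vertex $v$ a colour from $L(v)$ minimising the number of bad edges to already-coloured outneighbours, and a short averaging argument using the correspondence axiom (for each outneighbour $w$ of $v$, the colours of $v$ that can make $vw$ bad are distinct) shows that at most a $1/k$ proportion of the outedges of $v$ are bad. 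We will apply Lemma~\ref{new} with $\ell=k$.

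First I would build the collection $\mathcal X$. For each vertex $u\in V(G)$ and each colour $c\in L(u)$, set $X_{u,c}=\{(v,c'):v\in N(u)\ \text{and}\ (c,c')\in B_{uv}\}$. The correspondence axioms (each colour at $u$ lies in at most one bad pair on the edge $uv$, and the graph is simple) ensure that the pairs in $X_{u,c}$ have pairwise distinct first coordinates, so $\mathcal X:=\{X_{u,c}:\lvert X_{u,c}\rvert=\infty\}$ is a countable collection satisfying the hypotheses of Lemma~\ref{new}. Applying that lemma yields $k$-element sublists $\bigl(L'(v)\bigr)_{v\in V(G)}$ with the stated property.

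Next I would run the compactness argument verbatim from Theorem~\ref{t:simple}: order $V(G)$ as $v_1,v_2,\ldots$, choose for each $n$ a $(1/k)$-majority correspondence colouring $\chi_n$ of the finite induced subgraph on $v_1,\ldots,v_n$ that uses the size-$k$ sublists $L'$, and diagonalise to extract a colouring $\chi$ agreeing with infinitely many of the $\chi_n$ on each initial segment. To verify that $\chi$ is a $(1/k)$-majority correspondence colouring, I would split into three cases at a vertex $v_i$, exactly as in the text. If $v_i$ has finite degree, pick $m$ with $\chi_m$ agreeing with $\chi$ on $N[v_i]$; then $v_i$ has the same bad edges under $\chi$ and $\chi_m$, and at most a $1/k$ proportion of them are bad. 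If $v_i$ has infinite degree but $X_{v_i,\chi(v_i)}$ is finite, then every bad edge $v_iv_j$ forces $(v_j,\chi(v_j))\in X_{v_i,\chi(v_i)}$, so only finitely many edges at $v_i$ are bad. If $X_{v_i,\chi(v_i)}$ is infinite, then it lies in $\mathcal X$, so by Lemma~\ref{new} there are infinitely many pairs $(v_j,c')\in X_{v_i,\chi(v_i)}$ with $c'\notin L'(v_j)$; for each such $v_j$ we have $\chi(v_j)\neq c'$, and since $c'$ is the unique colour at $v_j$ that could make $v_iv_j$ bad given $\chi(v_i)$, the edge $v_iv_j$ is not bad — so $v_i$ meets infinitely many non-bad edges. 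The acyclic-digraph case is identical with outneighbourhoods replacing neighbourhoods throughout.

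Since Lemma~\ref{new} is already available, the remaining content is essentially a transcription of the proof of Theorem~\ref{t:simple}, and the step I would be most careful about is the definition of $\mathcal X$: in the correspondence setting the colour that is forbidden at a neighbour $v_j$ depends both on the colour chosen at $v_i$ and on the edge $v_iv_j$, so one must keep track of (vertex, colour) pairs rather than single colours, and the requirement in Lemma~\ref{new} that the pairs in each $X$ have distinct first coordinates is precisely what the greedy construction in the proof of that lemma needs.
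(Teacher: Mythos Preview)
Your proposal is correct and follows essentially the same route as the paper: you define $X_{u,c}$ exactly as the paper does, take $\mathcal X$ to be the infinite ones, apply Lemma~\ref{new} with $\ell=k$, run the compactness argument from Theorem~\ref{t:simple}, and verify the majority condition by the same three-way case split on whether the degree is finite, the degree is infinite but $X_{v_i,\chi(v_i)}$ is finite, or $X_{v_i,\chi(v_i)}$ is infinite. Your write-up is in fact somewhat more explicit than the paper's in justifying why the pairs in each $X_{u,c}$ have distinct first coordinates and why the finite base cases hold in the correspondence setting.
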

\section*{Acknowledgements}
Research supported by the European Research Council under the European Union's Horizon 2020 research and innovation programme (grant agreement no.\ 639046), and by UK Research and Innovation Future Leaders Fellowship MR/S016325/1. I am grateful to Ant\'onio Gir\~ao for drawing my attention to the question of $(1/k)$-majority colouring.

\end{document}